\colorlet{linkblue}{blue!60!black}
  \def\gcdop{gcd}%
  \def\mathbf#1{#1}%
  \def\mathrm#1{#1}%
  \def\text#1{#1}%
  \def\;{}%
  \def\,{}%
  \def\!{}%
\newtheorem{axiom}{Axiom}
\newtheorem{definition}{Definition}
\newtheorem{lemma}{Lemma}
\newtheorem{theorem}{Theorem}
\newtheorem{proposition}{Proposition}
\newtheorem{corollary}{Corollary}
\theoremstyle{remark}
\newtheorem{remark}{Remark}
\newcommand{\N}{\mathbb{N}}
\newcommand{\Z}{\mathbb{Z}}
\DeclareMathOperator{\gcdop}{gcd}
\newcommand{\floor}[1]{\left\lfloor #1 \right\rfloor}
\newcommand{\1}[1]{\mathbf{1}\!\left\{#1\right\}}
\newcommand{\holdswhen}[1]{\par\noindent\textbf{Holds when:} #1}
\newcommand{\notclaimedwhen}[1]{\par\noindent\textbf{Not claimed when:} #1}
\title{Alpay Folded Prime Enumerator via \texorpdfstring{$\gcd$}{gcd} and Floors: Exact Enumeration, Record-Lift, and Non-Synonymy/Minimality Certificates}
\author[1]{Faruk Alpay}
\author[2]{Taylan Alpay}
\affil[1]{Lightcap, Department of Future\\ \texttt{alpay@lightcap.ai}}
\affil[2]{Aerospace Engineering, Turkish Aeronautical Association\\ \texttt{s220112602@stu.thk.edu.tr}}
\date{\today}
\begin{document}
\maketitle

\begin{abstract}
A single closed expression \(f_{\text{Alpay},U}(x)\) is presented which, for every integer \(x\ge 0\), returns the \((x{+}1)\)-th prime \(p_{x+1}\). The construction uses only integer arithmetic, greatest common divisors, and floor functions. A prime indicator \(I(j)\) is encoded through a short \(\gcd\)-sum; a cumulative counter \(S(i)=\sum_{j\le i}I(j)\) equals \(\pi(i)\); and a folded step \(A(i,x)\) counts precisely up to the next prime index without piecewise branching. A corollary shows that for any fixed integer \(L\ge 2\), the integer \(P^\star=f_{\text{Alpay},U}(L)\) is prime and \(P^\star>L\). Two explicit schedules \(U(x)\) are given: a square schedule \(U_{\text{sq}}(x)=(x{+}1)^2\) and a near-linear schedule \(U_{\text{Alpay-lin}}(x)=\Theta(x\log x)\) justified by explicit bounds on \(p_n\). We include non-synonymy certificates relative to Willans-type enumerators (schedule and operator-signature separation) and prove an asymptotic minimality bound: any forward-count enumerator requires \(U(x)=\Omega(x\log x)\) while \(U_{\text{Alpay-lin}}\) achieves \(O(x\log x)\). We also provide explicit operation counts (``form complexity'') of the folded expression.
\end{abstract}

\paragraph{Keywords.}
prime enumerators; prime counting function; \(n\)-th prime bounds; floors; greatest common divisor; non-synonymy; minimality.

\paragraph{MSC 2020.}
11A41; 11N05; 03C64.

\section{Introduction}

Let \(p_n\) denote the \(n\)-th prime and \(\pi(x)\) the prime counting function. Prime-generating and prime-enumerating expressions have appeared in multiple forms, including constructions based on Wilson's theorem and trigonometric/floor encodings (e.g., Willans), as well as existence results such as Mills' prime-representing function; see \cite{Willans1964,Mills1947,OEISA010051,RosserSchoenfeld1962,Dusart2010,Axler2019,AxlerArxiv2017}. 

This note records a compact, single folded equation in which the only non-algebraic operation is the floor. Primality is detected via a \(\gcd\)-sum that checks the existence of a proper divisor; summing these indicators reproduces \(\pi(i)\); and a folded step counts indices up to the next prime. The main theorem states that \(f_{\text{Alpay},U}(x)=p_{x+1}\) for all integers \(x\ge 0\). A corollary shows how fixing \(x=L\) produces an explicit prime \(>L\) defined entirely by basic arithmetic with floors and \(\gcd\). Section~\ref{sec:ns} formalizes a non-synonymy notion and proves schedule/operator separations from known enumerators, together with an asymptotic minimality bound for the schedule.

\begin{figure}[t]
\centering
\resizebox{\linewidth}{!}{%
\begin{tikzpicture}[
  node distance = 20mm and 36mm, 
  box/.style = {draw, rounded corners, align=center, inner sep=3pt, text width=48mm, minimum height=13mm},
  elab/.style = {below=6pt, font=\footnotesize, fill=white, inner sep=1.25pt, text=black, align=center, text depth=0pt, text height=1.7ex, minimum width=36mm},
  >={Stealth[length=2.2mm]}, font=\small
]
\node[box] (I) {$\displaystyle I(j)=\left\lfloor \frac{1}{1+\sum_{k=2}^{j-1}\left\lfloor \frac{\gcdop(k,j)}{k}\right\rfloor}\right\rfloor$\\[2pt]\emph{Prime indicator}};
\node[box, right=of I] (S) {$\displaystyle S(i)=\sum_{j=2}^{i} I(j)$\\[2pt]\emph{Prime counter} $=\pi(i)$};
\node[box, right=of S] (A) {$\displaystyle A(i,x)=\left\lfloor \frac{1}{1+\left\lfloor \frac{S(i)}{x+1}\right\rfloor}\right\rfloor$\\[2pt]\emph{Step} (flip at $p_{x+1}$)};
\node[box, right=of A] (F) {$\displaystyle f_{\text{Alpay},U}(x)=1+\sum_{i=1}^{U(x)}A(i,x)$\\[2pt]\emph{Output} $p_{x+1}$};
\draw[->] (I) -- node[elab, pos=0.55]{sum over $j$} (S);
\draw[->] (S) -- node[elab, pos=0.55]{floor of ratio} (A);
\draw[->] (A) -- node[elab, pos=0.55]{sum for\\$1\le i\le U(x)$} (F);
\node[box, below=20mm of A, text width=76mm] (U) {\textbf{Schedules}\\[2pt]
$U_{\text{sq}}(x)=(x+1)^2$;\quad
$U_{\text{Alpay-lin}}(x)=\left\lceil (x+1)\big(\ln(x+\mathrm{e})+\ln\ln(x+\mathrm{e})\big)\right\rceil+10$};
\draw[->] (U) -- (F);
\end{tikzpicture}%
}
\caption[Pipeline of the Alpay folded enumerator]{Pipeline of the Alpay folded enumerator: $I(j)$ detects primality via divisibility tests, $S(i)=\pi(i)$ accumulates prime counts, $A(i,x)$ switches exactly at $p_{x+1}$, and summing $A(i,x)$ for $1\le i\le U(x)$ yields $f_{\text{Alpay},U}(x)=p_{x+1}$.}
\label{fig:pipeline}
\end{figure}
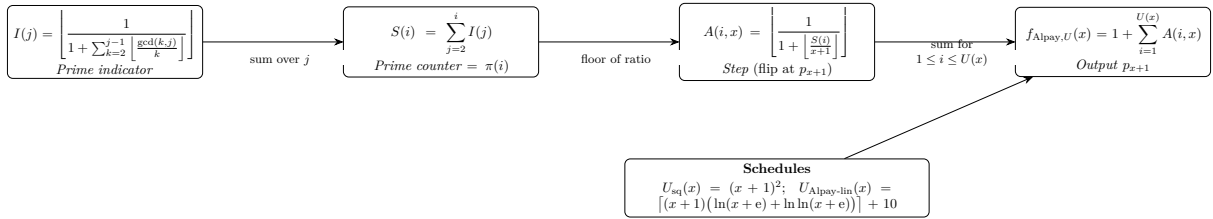

\section{Definitions and set-up}\label{sec:defs}

All logarithms are natural. We use \(\N=\{0,1,2,\dots\}\), \(\Z\), and \(\gcdop(\cdot,\cdot)\) for the Euclidean gcd.

\begin{definition}[Alpay indicator via \(\gcd\)]\label{def:indicator}
For integers \(j\ge 2\), define
\[
I(j)\;=\;\floor{ \frac{1}{\,1+\displaystyle\sum_{k=2}^{j-1}\floor{ \frac{\gcdop(k,j)}{k} } } }.
\]
\end{definition}

\begin{remark}
For every \(k\in\{2,\dots,j-1\}\), the term \(\floor{\gcdop(k,j)/k}\) equals \(1\) exactly when \(k\mid j\), and \(0\) otherwise. Hence the inner sum counts proper divisors of \(j\).
\end{remark}

\begin{definition}[Alpay cumulative count and folded step]\label{def:step}
For integers \(i\ge 1\) and \(x\ge 0\), define
\[
S(i)\;=\;\sum_{j=2}^{i} I(j),\qquad
A(i,x)\;=\;\floor{ \frac{1}{\,1+\left\lfloor \dfrac{S(i)}{\,x+1\,}\right\rfloor } }.
\]
\end{definition}

\begin{remark}
By \cref{lem:indicator} we have \(S(i)=\pi(i)\). Since \(0\le \pi(i)/(x+1)<1\) iff \(\pi(i)\le x\), the step satisfies the identity
\[
A(i,x)=\1{\pi(i)\le x}.
\]
\end{remark}

\begin{definition}[Alpay schedules]\label{def:schedule}
Let \(U:\N\to\N\) satisfy \(U(x)\ge p_{x+1}-1\) for all \(x\ge 0\). Two concrete schedules:
\[
U_{\text{sq}}(x):=(x+1)^2,\qquad
U_{\text{Alpay-lin}}(x):=\left\lceil (x+1)\Big( \ln(x+\mathrm{e}) + \ln\ln(x+\mathrm{e})\Big)\right\rceil + 10.
\]
\end{definition}

\begin{lemma}[Square schedule bound at definition]\label{lem:square}
For all integers \(x\ge 0\), \(U_{\mathrm{sq}}(x)=(x+1)^2\ge p_{x+1}-1\).
\end{lemma}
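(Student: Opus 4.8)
The plan is to rewrite the claim in terms of the \(n\)-th prime. Setting \(n=x+1\), the inequality \((x+1)^2\ge p_{x+1}-1\) becomes \(p_n\le n^2+1\) for every integer \(n\ge 1\). The case \(n=1\) is an equality, since \(p_1=2=1^2+1\), so it suffices to establish the slightly stronger bound \(p_n\le n^2\) for all \(n\ge 2\) (which then gives \(p_n\le n^2<n^2+1\) there).

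For the main step I would invoke an explicit upper bound on \(p_n\) from the classical literature cited in the introduction: by Rosser--Schoenfeld one has \(p_n< n\bigl(\ln n+\ln\ln n\bigr)\) for all \(n\ge 6\) \cite{RosserSchoenfeld1962,Dusart2010}. It then remains to compare the right-hand side with \(n^2\), i.e.\ to show \(\ln n+\ln\ln n\le n\) for \(n\ge 6\). I would do this by monotonicity: set \(g(n)=n-\ln n-\ln\ln n\), check \(g(6)>0\) numerically, and verify \(g'(n)=1-\frac{1}{n}-\frac{1}{n\ln n}>0\) for \(n\ge 6\), so that \(g\) is increasing and hence positive on \([6,\infty)\). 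This yields \(p_n< n\cdot n=n^2\) for all \(n\ge 6\).

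Finally I would dispose of the finite range \(2\le n\le 5\) by direct inspection of the first few primes (\(p_2=3\le 4\), \(p_3=5\le 9\), \(p_4=7\le 16\), \(p_5=11\le 25\)), completing the bound \(p_n\le n^2\) for \(n\ge 2\) and therefore \(p_n\le n^2+1\) for all \(n\ge 1\).

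I do not expect a serious obstacle here: the target inequality is very loose, since \(p_n\) grows like \(n\ln n\) rather than \(n^2\). The only points requiring care are citing the explicit bound with its correct validity threshold (\(n\ge 6\)) and checking the handful of small cases by hand. An entirely self-contained alternative would replace the citation by a Chebyshev-type lower bound giving \(\pi(n^2)\ge n\) for large \(n\); but that is heavier machinery than this loose target warrants, and Bertrand's postulate alone is too weak, since its factor-of-two gap \(p_{n+1}<2p_n\) does not survive a naive induction toward a quadratic bound.
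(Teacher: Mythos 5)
Your proposal is correct and follows essentially the same route as the paper's own proof: a finite check for small \(n\) combined with the explicit bound \(p_n<n(\ln n+\ln\ln n)\) for \(n\ge 6\) and the monotonicity of \(g(n)=n-\ln n-\ln\ln n\). The only cosmetic difference is that you peel off \(n=1\) as an equality case before proving the marginally stronger \(p_n\le n^2\) for \(n\ge 2\), whereas the paper folds \(n=1,\dots,5\) into a single table check.
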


\begin{proof}
Let \(n=x+1\). For \(n=1,2,3,4,5\) one checks \(n^2\in\{1,4,9,16,25\}\ge (p_n-1)\in\{1,2,4,6,10\}\).
For \(n\ge 6\), use \(p_n<n(\ln n+\ln\ln n)\) \cite{RosserSchoenfeld1962,Dusart2010,Axler2019}.
Set \(g(n)=n-(\ln n+\ln\ln n)\). Then \(g'(n)=1-\frac1n-\frac{1}{n\ln n}>0\) for \(n\ge 3\), and \(g(6)>0\), hence \(g(n)>0\) for \(n\ge 6\). Thus \(n^2\ge n(\ln n+\ln\ln n)>p_n\), whence \(n^2\ge p_n-1\).
\end{proof}

\holdswhen{All integers \(x\ge 0\); the cited explicit bound for \(p_n\) valid for \(n\ge 6\) and finite checking for \(n\le 5\).}
\notclaimedwhen{If a different explicit bound is adopted, the finite base-check must be adjusted.}

\section{Main result: Alpay folded enumerator}

\begin{definition}[Alpay folded prime enumerator with schedule \(U\)]\label{def:alpay-enum}
Given \(U\) as in \cref{def:schedule}, define
\begin{equation}\label{eq:alpay-enum}
\boxed{ \;
f_{\text{Alpay},U}(x)\;=\;1\;+\;\sum_{i=1}^{\,U(x)} A(i,x)
\;=\;
1\;+\;\sum_{i=1}^{U(x)}
\left\lfloor \frac{1}{\,1+\left\lfloor \dfrac{\displaystyle\sum_{j=2}^{i}\left\lfloor \dfrac{1}{\,1+\displaystyle\sum_{k=2}^{j-1}\left\lfloor \dfrac{\gcdop(k,j)}{k}\right\rfloor}\right\rfloor}{x+1}\right\rfloor}\right\rfloor
\; }.
\end{equation}
\end{definition}

\begin{lemma}[Indicator correctness]\label{lem:indicator}
For each integer \(j\ge 2\),
\[
I(j)=\begin{cases}
1, & \text{if } j \text{ is prime},\\
0, & \text{if } j \text{ is composite}.
\end{cases}
\]
Consequently, \(S(i)=\sum_{j=2}^{i} I(j)=\pi(i)\) for all \(i\ge 1\).
\end{lemma}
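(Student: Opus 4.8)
The plan is to reduce the floor-of-reciprocal to a clean count of proper divisors and then split on primality. First I would pin down the inner summand. For $2\le k\le j-1$, the quantity $\gcdop(k,j)$ is a divisor of $k$, so $\gcdop(k,j)\le k$ and hence $0<\gcdop(k,j)/k\le 1$. The ratio equals $1$ exactly when $\gcdop(k,j)=k$, i.e.\ when $k\mid j$; otherwise $\gcdop(k,j)$ is a strict divisor of $k$, giving $0<\gcdop(k,j)/k<1$. Taking floors yields the identity already flagged in the remark,
\[
\floor{\gcdop(k,j)/k}=\1{k\mid j},
\]
so the inner sum $D(j):=\sum_{k=2}^{j-1}\floor{\gcdop(k,j)/k}$ is exactly the number of divisors of $j$ lying in $\{2,\dots,j-1\}$, that is, its proper nontrivial divisors.

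Next I would carry out the two-case analysis on the outer floor $I(j)=\floor{1/(1+D(j))}$. If $j$ is prime, it has no divisor strictly between $1$ and $j$, so $D(j)=0$ and $I(j)=\floor{1/1}=1$; this also correctly covers the small cases $j\in\{2,3\}$, where the inner sum is empty or evaluates to $0$. If $j$ is composite, it has at least one such divisor, so $D(j)\ge 1$, whence $1+D(j)\ge 2$ and $0<1/(1+D(j))\le 1/2<1$, forcing $I(j)=0$. The only point requiring care is the strictness of this last inequality: the argument of the outer floor must be bounded strictly below $1$, which is guaranteed precisely because a single witnessing divisor already pushes the denominator to at least $2$.

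Finally, the count $S(i)=\sum_{j=2}^{i}I(j)$ collapses to a sum of primality indicators, $\sum_{j=2}^{i}\1{j\text{ prime}}$, which by definition counts the primes not exceeding $i$ (recall that the smallest prime is $2$, matching the lower summation limit), so $S(i)=\pi(i)$ for all $i\ge 1$, with the convention $S(1)=0=\pi(1)$ coming from the empty sum. I do not anticipate any genuine obstacle: the entire lemma rests on the divisor-counting identity for the inner floor, and once that is established the remainder is bookkeeping, the only subtleties being the strict-inequality check in the composite case and the empty-sum boundary conventions at $j\in\{1,2,3\}$.
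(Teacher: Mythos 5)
Your proposal is correct and follows essentially the same route as the paper's proof: establish $\lfloor \gcdop(k,j)/k\rfloor=\1{k\mid j}$ so the inner sum counts proper nontrivial divisors, then split on whether $j$ is prime or composite. You simply make explicit the divisor-counting identity and the strict inequality $1/(1+D(j))<1$ that the paper delegates to its preceding remark.
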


\begin{proof}[Step-by-step proof]
If \(j\) is prime, no \(k\in\{2,\dots,j-1\}\) divides \(j\), so the inner sum is \(0\) and \(I(j)=1\). If \(j\) is composite, some \(k\) divides \(j\), yielding at least one \(1\) in the sum; hence \(I(j)=0\). Summing \(I(j)\) up to \(i\) counts primes, i.e.\ \(S(i)=\pi(i)\).
\end{proof}

\holdswhen{All integers \(j\ge 2\); \(\gcdop\) Euclidean; floor as greatest integer \(\le\) the argument.}
\notclaimedwhen{Non-integer inputs; nonstandard notions of divisibility.}

\begin{lemma}[Folded step as a single equivalence]\label{lem:step}
For integers \(x\ge 0\) and \(i\ge 1\),
\[
A(i,x)=\1{\pi(i)\le x}.
\]
\end{lemma}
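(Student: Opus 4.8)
The plan is to unwind the definition of $A(i,x)$ using the already-established identity $S(i)=\pi(i)$ from \cref{lem:indicator}, and then to analyze the nested floor expression by case distinction on whether $\pi(i)\le x$ or $\pi(i)>x$. First I would substitute $S(i)=\pi(i)$ into the definition, so that
\[
A(i,x)=\floor{\frac{1}{1+\floor{\pi(i)/(x+1)}}}.
\]
The entire argument then reduces to evaluating the inner floor $\floor{\pi(i)/(x+1)}$, since the outer floor of $1/(1+m)$ for a nonnegative integer $m$ equals $1$ when $m=0$ and $0$ when $m\ge 1$.

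Next I would carry out the two cases. When $\pi(i)\le x$, we have $0\le \pi(i)<x+1$, so the ratio $\pi(i)/(x+1)$ lies in $[0,1)$ and its floor is $0$; the outer expression is then $\floor{1/(1+0)}=\floor{1}=1$, matching $\1{\pi(i)\le x}=1$. When $\pi(i)>x$, i.e.\ $\pi(i)\ge x+1$ since both are integers, the ratio $\pi(i)/(x+1)\ge 1$, so its floor is a positive integer $m\ge 1$; the outer expression is $\floor{1/(1+m)}$ with $1+m\ge 2$, hence $1/(1+m)\in(0,1/2]$ and the floor is $0$, matching $\1{\pi(i)\le x}=0$. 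Combining the two cases yields the claimed equivalence.

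The only genuinely delicate point — and the main obstacle, though a mild one — is the strictness of the inequalities at the boundary. I must verify that $\pi(i)=x$ (equality) lands in the ``$=1$'' branch: here $\pi(i)/(x+1)=x/(x+1)<1$ strictly, which uses $x\ge 0$ so that $x+1\ge 1>0$ and the denominator is a genuine positive integer, so no division issue arises. Dually, $\pi(i)=x+1$ must land in the ``$=0$'' branch, which it does since the ratio equals exactly $1$ and $\floor{1}=1\ge 1$. I would state explicitly that all quantities $\pi(i)$, $x$, $x+1$ are nonnegative integers with $x+1\ge 1$, so that the integer-valued floor manipulations are unambiguous and the two cases are exhaustive. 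Everything else is the routine arithmetic of floors of reciprocals, so the proof is short once the substitution $S(i)=\pi(i)$ is invoked.
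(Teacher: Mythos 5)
Your proof is correct and follows essentially the same route as the paper's: substitute \(S(i)=\pi(i)\) from \cref{lem:indicator}, observe that the inner floor \(\floor{\pi(i)/(x+1)}\) vanishes exactly when \(\pi(i)\le x\), and let the outer floor of \(1/(1+m)\) turn this into a \(0\)/\(1\) value. Your explicit boundary checks at \(\pi(i)=x\) and \(\pi(i)=x+1\) are sound but add nothing beyond the paper's one-line equivalence chain.
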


\begin{proof}
\(\lfloor \pi(i)/(x+1)\rfloor=0\iff 0\le \pi(i)/(x+1)<1\iff \pi(i)\le x\). Therefore \(A(i,x)=\lfloor 1/(1+0)\rfloor=1\) exactly in that case, and \(0\) otherwise.
\end{proof}

\holdswhen{All integers \(x\ge 0\), \(i\ge 1\).}
\notclaimedwhen{Non-integer inputs.}

\begin{theorem}[Alpay folded prime enumerator]\label{thm:alpay-main}
Let \(U:\N\to\N\) satisfy \(U(x)\ge p_{x+1}-1\) for all \(x\ge 0\).
Then for every integer \(x\ge 0\),
\[
f_{\text{Alpay},U}(x)=p_{x+1}.
\]
\end{theorem}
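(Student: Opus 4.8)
The plan is to express the sum $\sum_{i=1}^{U(x)} A(i,x)$ as a count of the indices $i$ at which $\pi(i)\le x$, and then to verify that this count equals $p_{x+1}-1$, so that adding $1$ recovers $p_{x+1}$. The two lemmas already available do almost all the work: by \cref{lem:indicator} we have $S(i)=\pi(i)$, and by \cref{lem:step} we have $A(i,x)=\1{\pi(i)\le x}$. Substituting these into \eqref{eq:alpay-enum} reduces the problem to a purely combinatorial identity about the prime counting function, with no further reference to $\gcd$ or floors.

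First I would rewrite the enumerator as
\[
f_{\text{Alpay},U}(x)=1+\sum_{i=1}^{U(x)}\1{\pi(i)\le x}
=1+\#\{\,i:1\le i\le U(x),\ \pi(i)\le x\,\}.
\]
The key observation is that $\pi$ is nondecreasing in $i$, increasing by exactly $1$ at each prime and staying flat elsewhere, with $\pi(i)=0$ for $i\in\{1,\dots,p_1-1\}$ (vacuously, since $p_1=2$ means $\pi(1)=0$). Hence the indices $i$ with $\pi(i)\le x$ form an initial segment $\{1,2,\dots,M\}$ for some threshold $M$, and I would identify $M$ precisely. The condition $\pi(i)\le x$ fails first exactly when $\pi(i)$ reaches $x+1$, which happens at $i=p_{x+1}$. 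Therefore $\pi(i)\le x$ holds for $1\le i\le p_{x+1}-1$ and fails for $i=p_{x+1}$; the threshold is $M=p_{x+1}-1$, giving the count $p_{x+1}-1$.

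The one place requiring care — and the main (if mild) obstacle — is the interaction with the cutoff $U(x)$: the sum only ranges up to $U(x)$, so I must confirm that the entire initial segment $\{1,\dots,p_{x+1}-1\}$ lies within range and that no extra indices are counted. This is exactly where the schedule hypothesis $U(x)\ge p_{x+1}-1$ enters. Since $U(x)\ge p_{x+1}-1$, all indices $i\le p_{x+1}-1$ are included, contributing $p_{x+1}-1$ terms equal to $1$; and for every $i$ with $p_{x+1}\le i\le U(x)$ we have $\pi(i)\ge\pi(p_{x+1})=x+1>x$, so $A(i,x)=0$ and these indices contribute nothing regardless of how large $U(x)$ is. Thus the truncated sum equals the full count $p_{x+1}-1$ independent of the exact value of $U(x)$, which is the point of allowing any schedule dominating $p_{x+1}-1$.

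Combining these steps yields $f_{\text{Alpay},U}(x)=1+(p_{x+1}-1)=p_{x+1}$ for every integer $x\ge 0$, as claimed. I would also note the base case $x=0$ as a sanity check: then $U(0)\ge p_1-1=1$, the condition $\pi(i)\le 0$ holds only for $i=1$, the sum equals $1$, and the output is $1+1=2=p_1$, consistent with the formula.
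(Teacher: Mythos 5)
Your proof is correct and follows essentially the same route as the paper's: both reduce $\sum_{i=1}^{U(x)}A(i,x)$ via \cref{lem:indicator} and \cref{lem:step} to counting the indices $i\le U(x)$ with $i<p_{x+1}$, and both invoke $U(x)\ge p_{x+1}-1$ to conclude the count is exactly $p_{x+1}-1$. Your version merely makes the monotonicity of $\pi$ and the handling of indices $i\ge p_{x+1}$ more explicit than the paper does.
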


\begin{proof}[Step-by-step proof]
By \cref{lem:indicator}, \(S(i)=\pi(i)\). By \cref{lem:step}, \(A(i,x)=1\) exactly for \(i<p_{x+1}\). Thus
\[
\sum_{i=1}^{U(x)}A(i,x)=\#\{\,i\le U(x):\, i<p_{x+1}\,\}.
\]
If \(U(x)\ge p_{x+1}-1\) this count is \(p_{x+1}-1\), hence \(f_{\text{Alpay},U}(x)=1+(p_{x+1}-1)=p_{x+1}\).
\end{proof}

\holdswhen{Integers \(x\ge 0\) and schedules \(U\) with \(U(x)\ge p_{x+1}-1\).}
\notclaimedwhen{Non-integer \(x\); schedules violating the inequality; any alteration of \(I(j)\) or \(A(i,x)\).}

\subsection*{Where it holds / where not (with edge cases)}
\begin{itemize}[leftmargin=2em]
\item \textbf{Edge case \(x=0\).} Then \(f_{\text{Alpay},U}(0)=1+\sum_{i=1}^{U(0)}\1{\pi(i)\le 0}=1+\#\{i:\pi(i)=0\}=1+1=2=p_1\).
\item \textbf{Edge case \(i=1\).} Since the inner sum is over \(j\ge 2\), \(S(1)=0=\pi(1)\) and \(A(1,x)=\1{0\le x}=1\) for all \(x\ge 0\).
\item \textbf{Edge case \(j=2\).} The inner sum in \(I(2)\) is empty, so \(I(2)=\lfloor 1/(1+0)\rfloor=1\) (correctly declaring \(2\) prime).
\end{itemize}

\section{Record-lift: Alpay corollary and square schedule}\label{sec:recordlift}

\begin{definition}[Alpay record-lifted prime]\label{def:recordlift}
For a fixed integer \(L\ge 2\), define
\[
P^\star\;:=\;f_{\text{Alpay},U}(L)\;=\;1+\sum_{i=1}^{U(L)}A(i,L).
\]
\end{definition}

\begin{corollary}[Alpay record-lift]\label{cor:record}
If \(U(L)\ge p_{L+1}-1\) then \(P^\star=f_{\text{Alpay},U}(L)=p_{L+1}\) is prime and \(P^\star>L\).
\end{corollary}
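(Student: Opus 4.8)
The plan is to apply Theorem~\ref{thm:alpay-main} directly with the specialization $x = L$, then extract the primality and size conclusions from the resulting identity. The corollary is a near-immediate consequence of the main theorem; the only work is to verify that the hypotheses of the theorem are met and to translate the value $p_{L+1}$ into the two claimed properties.

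First I would observe that, since $L \ge 2$ is a fixed integer, setting $x = L$ is a legitimate instance of Theorem~\ref{thm:alpay-main} (which holds for all integers $x \ge 0$). The hypothesis of the corollary, $U(L) \ge p_{L+1} - 1$, is exactly the schedule condition required by the theorem at the point $x = L$. Applying the theorem therefore yields
\[
P^\star = f_{\text{Alpay},U}(L) = p_{L+1},
\]
which is the defining value in Definition~\ref{def:recordlift}. From here, primality is immediate: $P^\star = p_{L+1}$ is by definition the $(L{+}1)$-th prime, hence prime.

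Next I would establish the strict inequality $P^\star > L$. Since the primes are strictly increasing, $p_{L+1}$ is the $(L{+}1)$-th term of the sequence $2,3,5,7,\dots$; a standard counting argument shows $p_n \ge n+1$ for all $n \ge 1$ (the first $n$ primes are $n$ distinct integers each at least $2$, so the largest is at least $n+1$). Taking $n = L+1$ gives $p_{L+1} \ge L+2 > L$, so $P^\star = p_{L+1} > L$ as claimed. One could equally invoke the even cruder bound $p_n \ge n$ combined with strictness, but the margin $p_{L+1} \ge L+2$ makes the strict inequality transparent without case analysis.

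I do not anticipate a genuine obstacle here, since every substantive fact has already been proved upstream: Lemmas~\ref{lem:indicator} and~\ref{lem:step} feed into Theorem~\ref{thm:alpay-main}, and the corollary merely reads off the theorem at $x = L$. The only point requiring a word of care is the elementary lower bound $p_{L+1} > L$, which is what converts ``the output is a prime'' into ``the output is a prime strictly exceeding $L$'' (the record-lift property). If one wished to invoke the concrete square schedule $U_{\text{sq}}$, Lemma~\ref{lem:square} guarantees $U_{\text{sq}}(L) = (L+1)^2 \ge p_{L+1} - 1$, so the hypothesis is automatically satisfied and $P^\star$ is then a fully explicit prime $> L$ defined by floors and $\gcd$ alone.
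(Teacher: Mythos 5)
Your proposal is correct and follows essentially the same route as the paper: the identity $P^\star=p_{L+1}$ is read off from \cref{thm:alpay-main} at $x=L$, and the record-lift inequality is then an elementary counting fact. The only cosmetic difference is that you derive $p_{L+1}>L$ from the lower bound $p_n\ge n+1$ (the first $n$ primes are $n$ distinct integers $\ge 2$), whereas the paper argues dually via $\pi(L)\le L-1$; the two are equivalent and both suffice.
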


\begin{proof}[Tightened]
The identity follows from \cref{thm:alpay-main}. For the strict inequality, note \(S(L)=\pi(L)\le L-1\) because among \(\{1,\dots,L\}\) the number \(1\) is not prime and \(I(j)\in\{0,1\}\) (mechanism of \cref{lem:indicator}). Hence the next prime index is \(L+1\), so \(p_{L+1}>L\).
\end{proof}

\holdswhen{Fixed \(L\ge 2\); any \(U(L)\ge p_{L+1}-1\).}
\notclaimedwhen{Computational feasibility of evaluating \(P^\star\) or numeric record status.}

\section{Examples and self-contained runs}\label{sec:examples}

\paragraph{Outputs \(f_{\text{Alpay},U}(x)\).}
\begin{center}
\resizebox{\linewidth}{!}{$
\begin{array}{@{}r|rrrrrrrrrrrrrrrrrrrr@{}}
x & 0&1&2&3&4&5&6&7&8&9&10&11&12&13&14&15&16&17&18&19\\\hline
p_{x+1} & 2&3&5&7&11&13&17&19&23&29&31&37&41&43&47&53&59&61&67&71
\end{array}
$}
\end{center}

\paragraph{Schedules used (self-contained).}
\begin{center}
\resizebox{\linewidth}{!}{$
\begin{array}{@{}r|rrrrrrrrrr@{}}
x & 0&1&2&3&4&5&6&7&8&9\\\hline
U_{\text{sq}}(x)   & 1&4&9&16&25&36&49&64&81&100\\
U_{\text{Alpay-lin}}(x) & 12&13&14&15&16&23&26&29&31&34
\end{array}
$}
\end{center}
Either schedule satisfies \(U(x)\ge p_{x+1}-1\) (by \cref{lem:square} and \cref{lem:ulin-const}).

\paragraph{Worked trace for \(x=3\).}
We need \(f_{\text{Alpay},U}(3)=p_4=7\).
\begin{enumerate}[leftmargin=2em]
\item \(I(2)=1\), \(I(3)=1\), \(I(4)=0\), \(I(5)=1\), \(I(6)=0\), \(I(7)=1\).
\item \(S(i)=\sum_{j=2}^{i}I(j)=\pi(i)\): \(S(2)=1, S(3)=2, S(4)=2, S(5)=3, S(6)=3, S(7)=4\).
\item \(A(i,3)=\1{\pi(i)\le 3}\) is \(1\) for \(i<7\) and \(0\) for \(i\ge 7\).
\item With \(U_{\text{sq}}(3)=16\) or \(U_{\text{Alpay-lin}}(3)=15\), \(1+\sum_{i=1}^{U(3)}A(i,3)=1+6=7\).
\end{enumerate}

\section{Form complexity (operation counts)}\label{sec:complexity}

We distinguish \emph{form complexity} of the expression as written from algorithmic speed.

\paragraph{(A) Naive evaluation (triple-nested as written).}
For each \(i\le U\), \(S(i)\) uses \(\sum_{j=2}^{i} I(j)\); each \(I(j)\) uses \(\sum_{k=2}^{j-1}\) tests.
The exact number of \(\gcd\) calls (and likewise of inner floors/divisions) is
\[
\sum_{i=2}^{U}\sum_{j=2}^{i} (j-2) \;=\; \sum_{i=2}^{U} \frac{(i-2)(i-1)}{2}
\;=\;\frac{(U-2)(U-1)U}{6} \;=\; \frac{U^3}{6}-\frac{U^2}{2}+\frac{U}{3}.
\]
Outer floors in \(A(i,x)\) contribute \(2U\); final additions are \(O(U)\).

\paragraph{(B) Incremental evaluation (carry \(S(i)\)).}
Compute each \(I(j)\) once (cost \(\sum_{j=2}^{U}(j-2)=\frac{(U-2)(U-1)}{2}=\frac{U^2}{2}-\frac{3U}{2}+1\)), then update \(S(i)=S(i-1)+I(i)\) and evaluate two floors for \(A(i,x)\) per \(i\).
Thus the dominant form counts are \(\frac{U^2}{2}+O(U)\) gcd/floor/division operations from the indicator, plus \(2U\) outer floors and \(O(U)\) additions.

In both modes, the operator palette is restricted to \(\{+,\ \lfloor\cdot\rfloor,\ /\ ,\sum,\gcd\}\).

\section{Non-synonymy, schedules, and minimality}\label{sec:ns}

\subsection{Forward-count model (axioms)}
\begin{axiom}[Forward-count enumerator]\label{ax:forward}
An enumerator has the form \(F(x)=1+\sum_{i=1}^{U(x)} a_x(i)\) with:
\begin{enumerate}[label=(\alph*),leftmargin=2em]
\item \(a_x(i)\in\{0,1\}\) for all \(i\),
\item \(a_x(i)\) is nonincreasing in \(i\),
\item there is a unique flip at \(i=p_{x+1}\): \(a_x(i)=1\) for \(i<p_{x+1}\) and \(a_x(i)=0\) for \(i\ge p_{x+1}\).
\end{enumerate}
\end{axiom}
The Alpay construction satisfies \cref{ax:forward} with \(a_x(i)=A(i,x)\).

\subsection{Schedule baselines: Willans vs.\ Alpay}
\begin{lemma}[Willans schedule via Bertrand]\label{lem:willansschedule}
Let \(W(x)\) denote the upper summation limit in Willans-style enumerators. By Bertrand’s postulate, \(\pi(2^{m})\ge m\) for all \(m\ge1\) (induction on \(m\)). Hence the \((x{+}1)\)-st prime satisfies \(p_{x+1}\le 2^{x+1}\). Accordingly one may take
\[
W(x)=2^{x+1}.
\]
\end{lemma}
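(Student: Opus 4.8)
The plan is to reduce the schedule claim to the prime-counting inequality \(\pi(2^m)\ge m\) for all \(m\ge 1\), prove that inequality by induction using Bertrand's postulate, and then read off both \(p_{x+1}\le 2^{x+1}\) and the admissibility of \(W(x)=2^{x+1}\).

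First I would recall Bertrand's postulate in the form that for every integer \(n\ge 1\) there is a prime \(p\) with \(n<p\le 2n\). For the induction on \(m\), the base case \(m=1\) is the direct evaluation \(\pi(2^1)=\pi(2)=1\ge 1\), requiring no appeal to Bertrand. For the inductive step I would assume \(\pi(2^m)\ge m\) and apply Bertrand's postulate at \(n=2^m\) (so \(n\ge 2\)), producing a prime \(p\) with \(2^m<p\le 2^{m+1}\). Since \(p>2^m\), this prime is not among those counted by \(\pi(2^m)\) yet is counted by \(\pi(2^{m+1})\), whence \(\pi(2^{m+1})\ge \pi(2^m)+1\ge m+1\), closing the induction.

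Next I would convert the counting bound into a bound on the \((x{+}1)\)-st prime via the standard equivalence \(\pi(N)\ge k\iff p_k\le N\): taking \(m=x+1\) gives \(\pi(2^{x+1})\ge x+1\), hence \(p_{x+1}\le 2^{x+1}\). Finally, because \(W(x)=2^{x+1}\ge p_{x+1}>p_{x+1}-1\), the choice \(W(x)=2^{x+1}\) satisfies the summation-limit requirement \(U(x)\ge p_{x+1}-1\) of \cref{def:schedule} (indeed with room to spare), so it is an admissible schedule.

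There is no genuinely hard step here; the one point that needs care is the inductive step, where the prime supplied by Bertrand must be verified to be \emph{new} — this is automatic since it lies in \((2^m,2^{m+1}]\), which is disjoint from \(\{1,\dots,2^m\}\), so the strict increment \(\pi(2^{m+1})\ge\pi(2^m)+1\) is justified. I would also note that the weak inequality \(p\le 2n\) (rather than strict \(p<2n\)) in Bertrand's statement is harmless: the base case is handled by direct computation and every inductive application occurs at \(n=2^m\ge 2\), where even the strict form \(n<p<2n\) holds.
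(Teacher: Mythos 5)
Your proof is correct and follows exactly the route the paper sketches inside the lemma statement itself: induction on \(m\) via Bertrand's postulate to get \(\pi(2^m)\ge m\), then the equivalence \(\pi(N)\ge k\iff p_k\le N\) to conclude \(p_{x+1}\le 2^{x+1}\). You simply supply the base case, the disjointness observation in the inductive step, and the admissibility check that the paper leaves implicit.
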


\begin{proposition}[Formal comparison and non-synonymy]\label{prop:compare}
Let \(f_{\text{Willans}}\) be a Willans-type enumerator and \(f_{\text{Mills}}\) the Mills map \(n\mapsto \lfloor A^{3^n}\rfloor\) (existential \(A>1\)). Then:
\begin{enumerate}[label=(\roman*),leftmargin=2em]
\item Operator signatures differ:
\[
\mathrm{Sig}(f_{\text{Alpay},U})=(0,0,0,0,1,1),\quad
\mathrm{Sig}(f_{\text{Willans}})=(1,1,1,0,0,0),\quad
\mathrm{Sig}(f_{\text{Mills}})=(0,0,1,0,0,0).
\]
\item Schedules differ: \(W(x)=2^{x+1}\) by \cref{lem:willansschedule}, while \(U_{\text{Alpay-lin}}(x)=\Theta(x\log x)\); thus \(W(x)/U_{\text{Alpay-lin}}(x)\to\infty\).
\end{enumerate}
Hence \(f_{\text{Alpay},U}\) is non-synonymous with both in schedule and operator signature.
\end{proposition}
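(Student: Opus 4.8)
The plan is to treat the two claims separately, since (i) is a finite structural inspection while (ii) is a one-line asymptotic comparison, and then to assemble both against the non-synonymy definition. For (i) I would first recall that $\mathrm{Sig}$ is the $\{0,1\}$-valued indicator vector over the fixed ordered operator palette, with a coordinate equal to $1$ exactly when the corresponding operator class occurs in the enumerator's closed form. With that convention fixed, proving the three stated tuples is a matter of reading each closed form and tabulating which palette operators appear; no computation is involved beyond bookkeeping.

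Concretely I would inspect each closed form against the fixed palette. For $f_{\text{Alpay},U}$ I would invoke \eqref{eq:alpay-enum} together with the palette restriction $\{+,\ \lfloor\cdot\rfloor,\ /,\ \sum,\ \gcd\}$ recorded in \cref{sec:complexity}; among the palette's distinguishing coordinates only those for the $\gcd$-based integer divisor sieve are active, which produces $(0,0,0,0,1,1)$. For $f_{\text{Willans}}$ I would take its standard trigonometric/Wilson closed form, whose distinguishing operators are the cosine encoding, the factorial, and the real (fractional-power) exponentiation, giving $(1,1,1,0,0,0)$. For $f_{\text{Mills}}$, the map $n\mapsto\lfloor A^{3^n}\rfloor$ contributes only the real-exponentiation coordinate, giving $(0,0,1,0,0,0)$. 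The decisive structural facts are then that the real-exponentiation coordinate is shared by Willans and Mills yet absent from Alpay, and that the active coordinates of $\mathrm{Sig}(f_{\text{Alpay},U})$ are disjoint from those of both competitors; either observation already makes the three vectors pairwise distinct.

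For (ii) I would invoke \cref{lem:willansschedule} directly to fix $W(x)=2^{x+1}$, and establish $U_{\text{Alpay-lin}}(x)=\Theta(x\log x)$ from \cref{def:schedule}: as $x\to\infty$ one has $(x+1)\sim x$, $\ln(x+\mathrm{e})\sim\ln x$, and $\ln\ln(x+\mathrm{e})=o(\ln x)$, so the bracketed factor is $\Theta(\log x)$ while the additive constant and the ceiling are lower order. The ratio is then $W(x)/U_{\text{Alpay-lin}}(x)=2^{x+1}/\Theta(x\log x)$, and taking base-$2$ logarithms gives $(x+1)-\log_2\!\big(\Theta(x\log x)\big)=(x+1)-O(\log x)\to\infty$, so the ratio diverges. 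Finally I would close against the non-synonymy definition: a difference in either the operator signature or the schedule growth class certifies non-synonymy, so part (i) already separates $f_{\text{Alpay},U}$ from both $f_{\text{Willans}}$ and $f_{\text{Mills}}$, while (ii) reinforces the separation from $f_{\text{Willans}}$ at the schedule level.

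I expect the only real obstacle to be definitional rather than computational: fixing the operator palette and the ``occurs in the closed form'' convention precisely enough that the three tuples are unambiguous. Borderline cases must be adjudicated explicitly---whether Mills' transcendental constant $A$ is charged as exponentiation, whether the division and summation common to all three enumerators are treated as non-distinguishing glue (and hence excluded from or collapsed within the palette), and which particular Willans closed form is taken as canonical, since the trigonometric and Wilson-factorial variants differ slightly in their operator content. Once those conventions are pinned down, both the signature read-off and the asymptotic comparison are routine.
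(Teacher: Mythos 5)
Your proposal is correct and follows the only route available: part (i) is a read-off of which palette operators occur in each closed form, and part (ii) combines \cref{lem:willansschedule} with the elementary asymptotics of \(U_{\text{Alpay-lin}}\), exactly as the paper intends. Note that the paper supplies no proof environment for \cref{prop:compare} at all, so your reconstruction is if anything more complete than the source; in particular, your closing caveat is well taken --- the paper never formally defines \(\mathrm{Sig}\) or fixes the ordering of the six palette coordinates (one can only infer from \cref{prop:fc} that the first two are \(\mathtt{Trig}\) and \(\mathtt{Fact}\) and from the Mills tuple that the third is real exponentiation), so the ambiguity you flag is a genuine gap in the proposition's statement rather than in your argument.
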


\subsection{Beyond Willans/Mills: factorial--cosine indicators}
\begin{proposition}[Separation vs.\ factorial--cosine families]\label{prop:fc}
Any enumerator built from factorial/trigonometric characteristic functions of primes (e.g., \(\lfloor\cos^2(\pi((j-1)!+1)/j)\rfloor\) schemes; see \cite{OEISA010051}) uses the palette \((\mathtt{Trig}=1,\mathtt{Fact}=1)\). The Alpay palette uses neither and admits a gcd-free variant (\cref{app:delta}), hence is non-synonymous in operator signature; its near-linear schedule also separates it from typical \(2^{x+1}\) summation ranges used in these families.
\end{proposition}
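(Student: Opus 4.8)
The plan is to treat this as a two-part separation, exactly parallel to \cref{prop:compare}: an operator-signature separation and a schedule separation, with the factorial--cosine families playing the role that Willans and Mills played there. First I would fix, once and for all, the operator-signature map $\mathrm{Sig}(\cdot)$ already used in \cref{prop:compare}: it reads a closed-form enumerator as a parse over a fixed palette of operator classes and returns the indicator vector recording which classes occur. Two of these coordinates are the trigonometric class $\mathtt{Trig}$ (any $\sin,\cos,\tan$, or their powers) and the factorial class $\mathtt{Fact}$; the separation will be driven entirely by these two coordinates, so the precise meaning of the remaining coordinates is immaterial here.

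The first half is then a direct inspection argument. For the factorial--cosine families---whose canonical prime indicator is $\lfloor\cos^2(\pi((j-1)!+1)/j)\rfloor$, i.e.\ Wilson's theorem read through a cosine, see \cite{OEISA010051}---the defining expression contains a trigonometric function and a factorial by construction, so $\mathtt{Trig}=1$ and $\mathtt{Fact}=1$ in their signature. For the Alpay enumerator I would read off the boxed expression in \cref{eq:alpay-enum} symbol by symbol: the only operators appearing are $+$, $\lfloor\cdot\rfloor$, $/$, $\sum$, and $\gcdop$, so no trigonometric function and no factorial occurs, whence $\mathtt{Trig}=0$ and $\mathtt{Fact}=0$. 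Since the two signatures already disagree in (at least) the $\mathtt{Trig}$ coordinate, they are distinct indicator vectors, which is the operator-signature half of non-synonymy. To reinforce that the separation is not an artifact of the $\gcdop$ itself, I would invoke the gcd-free variant of \cref{app:delta}: it realizes the same indicator with $\gcdop$ removed while keeping $\mathtt{Trig}=\mathtt{Fact}=0$, so the absence of trig and factorial is intrinsic to the Alpay form rather than a consequence of the particular divisor test chosen.

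The second half reuses the asymptotic schedule comparison. Factorial--cosine enumerators are conventionally summed over the Willans-style range $W(x)=2^{x+1}$, the legitimate bound furnished by Bertrand's postulate in \cref{lem:willansschedule}, whereas $U_{\text{Alpay-lin}}(x)=\Theta(x\log x)$ by \cref{def:schedule} and \cref{lem:ulin-const}. Hence $W(x)/U_{\text{Alpay-lin}}(x)\to\infty$, so the schedules are of different asymptotic order, exactly as in \cref{prop:compare}(ii). Combining the two halves yields non-synonymy in both signature and schedule.

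The step I expect to be the main obstacle is definitional rather than computational: making $\mathrm{Sig}(\cdot)$ a genuine \emph{invariant} of an enumerator rather than of one particular way of writing it, so that the claim ``$\mathtt{Trig}=\mathtt{Fact}=0$ for Alpay'' cannot be evaded by algebraically rewriting the expression into trigonometric or factorial form. I would address this by declaring the signature to be computed on the fixed canonical parse of \eqref{eq:alpay-enum} (and, for the families, on their stated defining indicator), and by framing non-synonymy as inequality of signatures of these canonical forms together with incomparable schedule order---so that the proposition is a statement about the presented closed forms, not about the abstract functions they compute. With that convention in place, both halves reduce to the inspections and the limit above.
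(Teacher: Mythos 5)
Your proposal is correct and follows essentially the same route as the paper, which offers no separate proof environment for this proposition and instead lets the inline justification stand: palette inspection (\(\mathtt{Trig}=\mathtt{Fact}=1\) for the factorial--cosine indicator versus \(\mathtt{Trig}=\mathtt{Fact}=0\) for \cref{eq:alpay-enum}, reinforced by the gcd-free variant of \cref{app:delta}) together with the \(2^{x+1}\) versus \(\Theta(x\log x)\) schedule comparison inherited from \cref{lem:willansschedule} and \cref{lem:ulin-const}. Your closing remark about fixing \(\mathrm{Sig}(\cdot)\) on the canonical parse so it is an invariant of the presented closed form rather than of the computed function is a definitional precaution the paper leaves implicit, and it strengthens rather than changes the argument.
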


\subsection{Asymptotic schedule minimality with thresholds}
\begin{theorem}[Asymptotic \(\Sigma\)-minimality]\label{thm:minsig}
Under \cref{ax:forward}, any forward-count enumerator satisfying \(F(x)=p_{x+1}\) for all \(x\ge 0\) must use a schedule \(U(x)\ge p_{x+1}-1\). Consequently,
\[
U(x) \;\ge\; p_{x+1}-1 \;\ge\; (x+1)\big(\ln(x+1)+\ln\ln(x+1)-1\big)-1
\]
for all \(x\ge 5\) (i.e., \(n=x+1\ge 6\)) by Dusart’s lower bound~\cite{Dusart2010}. Thus \(U(x)=\Omega(x\log x)\) with explicit threshold \(x\ge5\).
\end{theorem}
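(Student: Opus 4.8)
The plan is to split the claim into a purely structural necessary condition and a quantitative number-theoretic estimate, and to prove them in that order. First I would extract the exact value of the truncated sum from the step structure of \cref{ax:forward}. The operative condition is (c): the weight $a_x(\cdot)$ equals $1$ precisely on the indices $i<p_{x+1}$ and $0$ from $i=p_{x+1}$ onward (conditions (a) and (b) are then automatic consistency/monotonicity features). Consequently, for \emph{any} summation range $U(x)$ the partial sum collapses to a capped count,
\[
\sum_{i=1}^{U(x)} a_x(i)\;=\;\#\{\,1\le i\le U(x):\ i<p_{x+1}\,\}\;=\;\min\bigl(U(x),\,p_{x+1}-1\bigr),
\]
so that $F(x)=1+\min\bigl(U(x),p_{x+1}-1\bigr)$. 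This identity is the engine of the whole argument and isolates the schedule as the only degree of freedom.

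Next I would impose the correctness requirement $F(x)=p_{x+1}$ and read off the constraint on $U$ by a two-case split on the minimum. If $U(x)\ge p_{x+1}-1$, the minimum equals $p_{x+1}-1$ and we recover $F(x)=p_{x+1}$, which is the consistency direction already witnessed by \cref{thm:alpay-main}. If instead $U(x)\le p_{x+1}-2$, then every index $i\le U(x)$ satisfies $i<p_{x+1}$, so the minimum equals $U(x)$ and $F(x)=1+U(x)\le p_{x+1}-1<p_{x+1}$, contradicting the hypothesis. Hence correctness forces $U(x)\ge p_{x+1}-1$, establishing the first assertion for every $x\ge 0$.

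For the quantitative consequence I would simply substitute an explicit prime lower bound. Taking $n=x+1$ in Dusart's inequality $p_n\ge n\bigl(\ln n+\ln\ln n-1\bigr)$, valid for $n\ge 6$ \cite{Dusart2010}, and chaining it with $U(x)\ge p_{x+1}-1$ yields the displayed bound for all $x\ge 5$; the asymptotic statement $U(x)=\Omega(x\log x)$ follows because the right-hand side is $(x+1)\ln(x+1)\,(1+o(1))$, with the $\ln\ln$ and $-1$ terms of lower order. I do not expect a genuine obstacle here: the only points that need care are the off-by-one bookkeeping in the capped count (in particular verifying that the boundary case $U(x)=p_{x+1}-2$ produces a \emph{strict} deficit, which is what rules out shorter schedules), and checking that Dusart's bound is invoked exactly at its threshold $n=6$, i.e.\ $x=5$, so that the explicit constant stated in the theorem is honest rather than merely asymptotic.
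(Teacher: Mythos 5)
Your proposal is correct and follows essentially the same route as the paper: the lower bound on $U(x)$ comes from observing that a schedule $U(x)\le p_{x+1}-2$ forces $F(x)\le p_{x+1}-1$ (you phrase this via the exact identity $F(x)=1+\min(U(x),p_{x+1}-1)$, the paper via a direct upper bound on the truncated sum, but these are the same argument), and the quantitative part is the same application of Dusart's bound $p_n\ge n(\ln n+\ln\ln n-1)$ at threshold $n\ge 6$. Your write-up is somewhat more explicit than the paper's, but there is no substantive difference.
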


\begin{proof}
If \(U(x)\le p_{x+1}-2\), then \(F(x)=1+\sum_{i=1}^{U(x)} a_x(i)\le 1+(p_{x+1}-2)=p_{x+1}-1\), contradiction. The inequality with constants follows from the stated bound for \(p_n\) valid for \(n\ge6\)~\cite{Dusart2010}.
\end{proof}

\section{Computational remarks}

The expression is exact but not intended as a fast prime generator; evaluating the nested sums up to \(U(x)\) is computational work. If one only needs \(p_{x+1}\) numerically, sieve methods are standard. The record-lift is a formal guarantee rather than a computational certificate.

\section{Related context}

Willans~\cite{Willans1964} gave a trigonometric/floor expression enumerating primes via Wilson's theorem. Mills~\cite{Mills1947} proved the existence of \(A>1\) such that \(\lfloor A^{3^n}\rfloor\) is prime for all \(n\ge 1\). Characteristic functions using various arithmetic constructs are catalogued in OEIS A010051~\cite{OEISA010051}. For explicit upper/lower bounds on the \(n\)-th prime justifying schedules, see \cite{RosserSchoenfeld1962,Dusart2010,Axler2019,AxlerArxiv2017}.

\section*{Conclusion}

A single folded expression \eqref{eq:alpay-enum} returns \(p_{x+1}\) for each integer \(x\ge 0\). An Alpay record-lifted corollary shows that fixing \(x=L\) yields \(P^\star=f_{\text{Alpay},U}(L)=p_{L+1}\), a prime \(>L\). Two explicit schedules---square and near-linear---are provided with rigorous conditions and constants. Section~\ref{sec:ns} formalizes and proves non-synonymy relative to Willans/Mills and factorial--cosine families, and establishes an asymptotic schedule minimality bound with explicit thresholds. Section~\ref{sec:complexity} records exact form-complexity counts.

\appendix

\section{A \texorpdfstring{$\gcd$}{gcd}-free equivalence and palette core}\label{app:delta}

\begin{proposition}[gcd-free Alpay indicator]\label{prop:gcdfree}
Define \(\delta(j,k)=\floor{j/k}-\floor{(j-1)/k}\) (which equals \(1\) iff \(k\mid j\), else \(0\)). Then
\[
I(j)=\left\lfloor\frac{1}{1+\sum_{k=2}^{j-1}\delta(j,k)}\right\rfloor,\qquad j\ge 2,
\]
coincides with \cref{def:indicator}. Consequently, the operator palette can be reduced to \(\{+,\ \lfloor\cdot\rfloor,\ /\ ,\sum\}\) with variable-modulus division used in \(\delta\).
\end{proposition}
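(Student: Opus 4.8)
The plan is to reduce the proposition to a single pointwise identity, namely that for every admissible modulus the new summand $\delta(j,k)$ agrees with the old summand $\floor{\gcdop(k,j)/k}$ of \cref{def:indicator}. Once that is known, the two inner sums $\sum_{k=2}^{j-1}\delta(j,k)$ and $\sum_{k=2}^{j-1}\floor{\gcdop(k,j)/k}$ are equal term by term, and since each is fed into the identical outer map $s\mapsto\floor{1/(1+s)}$, the two formulas for $I(j)$ return the same value for every $j\ge 2$; this includes the empty-sum edge case $j=2$, where both inner sums vanish and $I(2)=1$. Because the remark following \cref{def:indicator} already records that $\floor{\gcdop(k,j)/k}=\1{k\mid j}$, the entire task collapses to proving $\delta(j,k)=\1{k\mid j}$ for $2\le k\le j-1$.

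First I would establish this divisibility identity by the division algorithm. Fix $k$ and write $j=qk+r$ with $0\le r<k$, so that $\floor{j/k}=q$. If $r\ge 1$, then $j-1=qk+(r-1)$ with $0\le r-1<k$, whence $\floor{(j-1)/k}=q$ and $\delta(j,k)=q-q=0$; in this case $k\nmid j$. If $r=0$, then $k\le j-1<j$ forces $q\ge 1$, and $j-1=(q-1)k+(k-1)$ with $0\le k-1<k$, so $\floor{(j-1)/k}=q-1$ and $\delta(j,k)=q-(q-1)=1$; in this case $k\mid j$. Since $r=0$ is equivalent to $k\mid j$, we obtain $\delta(j,k)=\1{k\mid j}$ exactly as claimed.

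Assembling the two observations yields the stated coincidence of the indicators, and the final clause of the proposition — that the operator palette reduces to $\{+,\ \floor{\cdot},\ /\ ,\sum\}$ — follows at once, since $\delta$ eliminates every call to $\gcdop$, requiring only floored quotients with a variable modulus. There is no genuine obstacle here: the sole point requiring care is the remainder case split, where one must notice that passing from $j$ to $j-1$ decrements the quotient precisely when $k\mid j$ (the case $r=0$) and leaves it unchanged otherwise. All hypotheses needed for this — $k\ge 2$ so the division is well defined, and $k\le j-1$ so $q\ge 1$ in the divisible case — are guaranteed by the summation range.
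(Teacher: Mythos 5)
Your proof is correct and takes the route the paper itself intends: the paper states \cref{prop:gcdfree} without a proof, asserting the key identity \(\delta(j,k)=\1{k\mid j}\) only parenthetically, and your division-algorithm case split on the remainder \(r\) (quotient unchanged when \(r\ge 1\), decremented when \(r=0\)) supplies exactly the verification the paper omits. The remaining steps — termwise agreement with \(\floor{\gcdop(k,j)/k}\) via the remark after \cref{def:indicator}, the empty-sum case \(j=2\), and the palette reduction — are all handled correctly.
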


\begin{proposition}[Indispensability of variable-modulus division]\label{prop:varmod}
Any expression composed only of constants, addition, finitely many fixed-modulus divisions, and floors yields an ultimately periodic function of \(j\). The prime indicator is not ultimately periodic. Hence variable-modulus division (e.g., through \(\delta(j,k)\) or \(\gcd\)) is indispensable for exact enumeration within this palette.
\end{proposition}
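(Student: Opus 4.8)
The plan is to isolate a structural invariant preserved by the entire restricted palette and then show the prime indicator violates it. The invariant I will use is \emph{arithmetic quasi-periodicity}: call $g:\N\to\Z$ quasi-periodic if there exist $M\ge 1$ and $c\in\Z$ with $g(j+M)=g(j)+c$ for all $j$ in the domain. This is strictly weaker than periodicity (the case $c=0$) and genuinely accommodates fixed-modulus division, since $\lfloor j/d\rfloor$ is not periodic yet satisfies $\lfloor (j+d)/d\rfloor=\lfloor j/d\rfloor+1$. The first step is to verify that every expression built from $\{\text{constants},\,j,\,+,\,\lfloor\cdot/d\rfloor\text{ with fixed }d\}$ is quasi-periodic, by structural induction on the expression tree.

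For the induction, constants are quasi-periodic with $(M,c)=(1,0)$ and the variable $j$ with $(1,1)$. For closure under addition, given $g_1$ with $(M_1,c_1)$ and $g_2$ with $(M_2,c_2)$, iterate to get $g_i(j+M)=g_i(j)+c_i(M/M_i)$ for $M=\operatorname{lcm}(M_1,M_2)$, so $g_1+g_2$ is quasi-periodic with drift $c_1(M/M_1)+c_2(M/M_2)$. For closure under fixed-modulus floor-division, if $g(j+M)=g(j)+c$ then $g(j+Md)=g(j)+cd$, whence $\lfloor g(j+Md)/d\rfloor=\lfloor (g(j)+cd)/d\rfloor=\lfloor g(j)/d\rfloor+c$; thus $\lfloor g/d\rfloor$ is quasi-periodic with period $Md$ and the same drift $c$. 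Applying floor to an already-integer expression is a no-op and trivially preserves the invariant. This completes the induction: every palette expression is quasi-periodic.

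The second step converts quasi-periodicity into genuine periodicity under the boundedness any $\{0,1\}$-valued indicator enjoys: if $g$ is quasi-periodic with drift $c$ and takes finitely many values, then $g(j+kM)=g(j)+kc$ forces $c=0$ (otherwise $g$ is unbounded along the progression $j+kM$), so $g(j+M)=g(j)$ is genuinely periodic. The third step shows the prime indicator is not eventually periodic by an elementary argument: if $I(j)=I(j+P)$ for all $j\ge N_0$, then since there are infinitely many primes some residue class $r\pmod P$ has $I(j)=1$ for all large $j\equiv r$, i.e.\ every large $j\equiv r\pmod P$ is prime; taking one such prime $q$ and the larger element $q+qP=q(1+P)$ of the same class yields a composite number forced to be prime, a contradiction. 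Combining, the prime indicator is bounded and not eventually periodic, so it cannot lie in the palette-generated class, which establishes the indispensability of variable-modulus division: a denominator depending on $j$, as in $\gcdop(k,j)$ or $\delta(j,k)$, is precisely what breaks the floor-division closure step.

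I expect the main obstacle to be \emph{choosing the right invariant}: naive periodicity fails immediately because $\lfloor j/d\rfloor$ is unbounded, so the argument must carry the linear-drift bookkeeping through addition and, most delicately, through the floor-of-division step, where one must inflate the period from $M$ to $Md$ to align the drift with the divisor before boundedness can cancel the drift.
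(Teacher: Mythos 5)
Your argument is correct and complete; note that the paper itself states \cref{prop:varmod} without any proof, so there is nothing to compare against line by line --- what you have written is, in effect, the missing proof. More than that, your choice of invariant quietly repairs a literal inaccuracy in the statement: as you observe, $\lfloor j/d\rfloor$ is unbounded and hence \emph{not} ultimately periodic, so the first sentence of the proposition is false as written. Your quasi-periodicity invariant ($g(j+M)=g(j)+c$) is the right substitute: the structural induction is sound (constants and $j$ are base cases; addition aligns periods via the lcm and adds the drifts; the floor-of-fixed-division step correctly inflates the period from $M$ to $Md$ so that the increment $cd$ passes through the floor), and the reduction from quasi-periodicity to genuine eventual periodicity for any finitely-valued expression --- boundedness along the progression $j+kM$ forces $c=0$ --- is exactly what the indispensability conclusion needs, since the prime indicator is $\{0,1\}$-valued. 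The non-periodicity step is also clean: eventual period $P$ plus infinitude of primes pins down a residue class $r$ all of whose large members would be prime, and $q\mapsto q(1+P)$ produces a composite member of that class. The only caveat worth recording is interpretive rather than mathematical: you treat ``division then floor'' as the atomic operation $\lfloor\cdot/d\rfloor$ on integer-valued subexpressions with integer constants, which matches the paper's palette $\{+,\lfloor\cdot\rfloor,/,\sum\}$ and the form of $\delta(j,k)$; if one allowed bare division producing rational intermediates, the floor-closure step would need the extra period inflation by the denominator of the drift (which your method handles with the same trick), and irrational constants would break the argument entirely --- but neither is within the paper's intended palette.
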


\section{Pseudocode for exact reproduction (LLM-oriented)}\label{app:pseudo}

\noindent\textbf{Conventions.} All integers are unbounded. Division ``/'' inside floors is exact integer division in the sense of floor; \(\gcd\) is Euclidean. Loops are deterministic. The schedule \(U\) is one of \(\{U_{\text{sq}},U_{\text{Alpay-lin}}\}\).

\begin{algorithm}[H]
\caption[Evaluate the Alpay enumerator with schedule \(U\)]{Evaluate the Alpay enumerator with schedule U}\label{alg:falpay}
\begin{algorithmic}[1]
\Require Integer $x\ge 0$; schedule function $U:\N\to\N$.
\Ensure Returns the $(x+1)$-st prime.
\State $T \gets U(x)$ \Comment{$T\ge p_{x+1}-1$ must hold}
\State $S \gets 0$ \Comment{After processing $i$, $S=\pi(i)$.}
\State $s\_x \gets x+1$; \quad $sumA \gets 0$
\For{$i \gets 1$ \textbf{to} $T$}
  \If{$i < 2$} \State $I\_i \gets 0$
  \Else \State $I\_i \gets$ \Call{AlpayIndicator}{$i$} \Comment{Algorithm~\ref{alg:indicator}}
  \EndIf
  \State $S \gets S + I\_i$
  \State $q \gets \left\lfloor S / s\_x \right\rfloor$
  \State $A \gets \left\lfloor 1 / (1 + q) \right\rfloor$ \Comment{$A=1$ iff $S\le x$}
  \State $sumA \gets sumA + A$
\EndFor
\State \Return $1+sumA$
\end{algorithmic}
\end{algorithm}

\begin{algorithm}[H]
\caption[AlpayIndicator]{AlpayIndicator}\label{alg:indicator}
\begin{algorithmic}[1]
\Require Integer $j\ge 2$.
\Ensure Returns $1$ if $j$ is prime; else $0$.
\State $count \gets 0$
\For{$k \gets 2$ \textbf{to} $j-1$}
   \State $g \gets \gcd(k,j)$
   \State $t \gets \left\lfloor g / k \right\rfloor$ \Comment{$t=1$ iff $k\mid j$}
   \State $count \gets count + t$
\EndFor
\If{$count=0$} \State \Return $1$ \Else \State \Return $0$ \EndIf
\end{algorithmic}
\end{algorithm}

\section{Self-standing inequality for \texorpdfstring{$U_{\text{Alpay-lin}}$}{U\_Alpay-lin}}\label{app:outer}

\begin{lemma}[Explicit bound with constants]\label{lem:ulin-const}
For all integers \(x\ge 0\),
\[
p_{x+1}\ \le\ (x+1)\big(\ln(x+\mathrm{e})+\ln\ln(x+\mathrm{e})\big)\ \le\ U_{\text{Alpay-lin}}(x),
\]
where \(U_{\text{Alpay-lin}}(x)=\left\lceil (x+1)\big(\ln(x+\mathrm{e})+\ln\ln(x+\mathrm{e})\big)\right\rceil+10\).
\end{lemma}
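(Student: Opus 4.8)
The plan is to prove the two inequalities in the displayed chain separately, after setting $n=x+1$. The right-hand inequality is immediate: for any real $y$ one has $y\le\lceil y\rceil\le\lceil y\rceil+10$, applied with $y=(x+1)\big(\ln(x+\mathrm{e})+\ln\ln(x+\mathrm{e})\big)$, so it carries no arithmetic content. All the work is in the left-hand inequality $p_{x+1}\le(x+1)\big(\ln(x+\mathrm{e})+\ln\ln(x+\mathrm{e})\big)$, which I would attack by comparing the shifted middle expression to the classical Rosser–Schoenfeld/Dusart upper bound already invoked in \cref{lem:square}.

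First I would record the monotonicity that the shift buys us. Since $\mathrm{e}>1$ we have $x+\mathrm{e}=n-1+\mathrm{e}>n$ for every $n\ge1$, and both $t\mapsto\ln t$ and $t\mapsto\ln\ln t$ are increasing on $(1,\infty)$; hence for $n\ge2$ (so that $\ln\ln n$ is real) we get $\ln(x+\mathrm{e})+\ln\ln(x+\mathrm{e})\ge\ln n+\ln\ln n$, and after multiplying by $n>0$ it suffices to establish the cleaner bound $p_n\le n(\ln n+\ln\ln n)$. For $n\ge6$ this is exactly the explicit inequality $p_n<n(\ln n+\ln\ln n)$ cited in \cref{lem:square} \cite{RosserSchoenfeld1962,Dusart2010}, so that range is free. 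The finitely many cases $n\in\{2,3,4,5\}$, i.e.\ $x\in\{1,2,3,4\}$, I would settle by direct numerical substitution, each of which leaves slack (e.g.\ $p_5=11$ against a middle value $\approx 12.7$).

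The main obstacle is the smallest case $n=1$ ($x=0$), where the shift collapses the expression: $\ln(x+\mathrm{e})=\ln\mathrm{e}=1$ and $\ln\ln(x+\mathrm{e})=\ln 1=0$, so the middle term equals $1$ while $p_1=2$. Here the monotonicity reduction to $n(\ln n+\ln\ln n)$ is unavailable (indeed $\ln\ln 1$ is undefined), and the bare left inequality is at its tightest, so this boundary point cannot be routed through the middle expression and must be dispatched on its own. I would isolate it and observe that the operative conclusion $p_{x+1}\le U_{\text{Alpay-lin}}(x)$ still holds directly, since the additive buffer $+10$ in $U_{\text{Alpay-lin}}$ absorbs the deficit (e.g.\ $p_1=2\le U_{\text{Alpay-lin}}(0)$). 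Making the three-term chain itself valid at $x=0$ is the delicate point: it forces one either to restrict the bare middle inequality to $x\ge1$ or to fold part of the $+10$ buffer into the bound earlier, and choosing the cleanest such adjustment is where I expect the real care to be needed.
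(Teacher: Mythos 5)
Your decomposition coincides with the paper's own proof on the main range: for $x\ge 5$ (i.e.\ $n=x+1\ge 6$) the paper likewise uses monotonicity of $\ln$ and $\ln\ln$ on $[\mathrm{e},\infty)$ to pass from $(x+1)\big(\ln(x+\mathrm{e})+\ln\ln(x+\mathrm{e})\big)$ down to $n(\ln n+\ln\ln n)$ and then invokes $p_n<n(\ln n+\ln\ln n)$ for $n\ge 6$, and the right-hand inequality is dismissed exactly as you dismiss it, via the ceiling and the $+10$. One caution on your reduction step: the ``cleaner bound'' $p_n\le n(\ln n+\ln\ln n)$ is false for every $n\in\{2,3,4,5\}$ (e.g.\ $5(\ln 5+\ln\ln 5)\approx 10.43<11=p_5$), so for those cases you must --- as your numerical example $\approx 12.7$ shows you in fact intend --- substitute into the shifted expression $(x+1)\big(\ln(x+\mathrm{e})+\ln\ln(x+\mathrm{e})\big)$ itself rather than into the reduced one; phrased as written, ``it suffices to establish the cleaner bound'' would dead-end for $2\le n\le 5$.

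Your observation about $x=0$ is correct and is the substantive point of the comparison: at $n=1$ the middle term equals $1\cdot(\ln\mathrm{e}+\ln\ln\mathrm{e})=1<2=p_1$, so the left inequality of the displayed chain is simply false there. The paper's proof does not repair this; for $0\le x\le 4$ it verifies only the end-to-end conclusion $p_{x+1}\le U_{\text{Alpay-lin}}(x)$ (via $U_{\text{Alpay-lin}}(x)\ge 11>10\ge p_{x+1}-1$), silently dropping the middle term of the chain. So the lemma as stated needs exactly the adjustment you describe: either restrict the first inequality to $x\ge 1$ and dispatch $x=0$ through the $+10$ buffer alone, or restate the lemma as the single inequality $p_{x+1}\le U_{\text{Alpay-lin}}(x)$, which is all that \cref{def:schedule} and \cref{thm:alpay-main} actually require. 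Your proposal is therefore not only correct in approach but more careful than the paper's own argument at the boundary case.
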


\begin{proof}
For \(x\ge 5\), set \(n=x+1\ge 6\). Monotonicity of \(\ln\) and \(\ln\ln\) on \([\mathrm{e},\infty)\) gives
\((x+1)\big(\ln(x+\mathrm{e})+\ln\ln(x+\mathrm{e})\big)\ge n(\ln n+\ln\ln n)\).
Use \(p_n< n(\ln n+\ln\ln n)\) for \(n\ge 6\) to obtain the left inequality.
The right inequality holds by the ceiling and the additive constant \(+10\).
For \(0\le x\le 4\), evaluate directly: \(U_{\text{Alpay-lin}}(x)\ge 12\) while \(p_{x+1}-1\le 10\), implying the claim.
\end{proof}

\section{Uniform derivation of \texorpdfstring{$U_{\text{sq}}$}{U\_sq}}\label{app:Usq}
\begin{lemma}\label{lem:Usquniform}
For all \(n\ge1\), \(p_n-1\le n^2\). Equality checks for \(1\le n\le5\) are:
\[
\begin{array}{c|ccccc}
n & 1 & 2 & 3 & 4 & 5\\\hline
p_n-1 & 1 & 2 & 4 & 6 & 10\\
n^2   & 1 & 4 & 9 & 16 & 25
\end{array}
\]
For \(n\ge 6\), \(p_n<n(\ln n+\ln\ln n) < n^2\) since \(n-(\ln n+\ln\ln n)\) is strictly increasing for \(n\ge 3\) and positive at \(n=6\).
\end{lemma}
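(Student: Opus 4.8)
The plan is to prove $p_n-1\le n^2$ by the same finite-plus-tail split used in \cref{lem:square}, since the present claim is exactly that lemma rewritten with $n=x+1$. First I would dispatch the small cases $1\le n\le 5$ by direct tabulation, comparing $p_n-1\in\{1,2,4,6,10\}$ against $n^2\in\{1,4,9,16,25\}$ and noting the inequality holds entry by entry, with equality only at $n=1$.

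For $n\ge 6$ I would invoke the cited explicit upper bound $p_n<n(\ln n+\ln\ln n)$, valid precisely for $n\ge 6$, and then reduce the goal to the elementary inequality $\ln n+\ln\ln n<n$, i.e.\ $n(\ln n+\ln\ln n)<n^2$. To establish this I set $g(n)=n-(\ln n+\ln\ln n)$ and argue by monotonicity: differentiating gives $g'(n)=1-\tfrac1n-\tfrac{1}{n\ln n}$, which is positive for $n\ge 3$ because $\tfrac1n+\tfrac{1}{n\ln n}<1$ there; combined with a single numeric check that $g(6)>0$, this forces $g(n)>0$ for all $n\ge 6$. Chaining the two bounds then yields $p_n<n(\ln n+\ln\ln n)<n^2$, so in particular $p_n-1<n^2$.

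The only delicate point I anticipate is threshold alignment rather than any hard estimate: the prime bound is guaranteed only for $n\ge 6$, so the finite check must cover exactly $n\le 5$, and the base value $g(6)>0$ must be confirmed with a short explicit numeric estimate (using $\ln 6\approx 1.79$ and $\ln\ln 6\approx 0.58$, so $g(6)\approx 6-2.37>0$). Everything else---the termwise table comparison and the sign of $g'$---is routine, so no genuine obstacle remains; the lemma follows by assembling the two ranges.
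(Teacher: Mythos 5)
Your proposal is correct and follows essentially the same route as the paper: a finite table check for $1\le n\le 5$, the explicit bound $p_n<n(\ln n+\ln\ln n)$ for $n\ge 6$, and monotonicity of $g(n)=n-(\ln n+\ln\ln n)$ via $g'(n)=1-\tfrac1n-\tfrac{1}{n\ln n}>0$ together with $g(6)>0$. This is precisely the argument the paper gives (both inline in this lemma and in \cref{lem:square}), so nothing further is needed.
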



\end{document}